\newcommand{\comp}[1]{$\mathcal{O}(#1)$}
\newtheorem{lemma}{Lemma}
\newtheorem{theorem}{Theorem}
\newtheorem*{definition*}{Definition}
\newtheorem{example}{Example}
\DeclareMathOperator{\argmax}{argmax} 
\DeclareMathOperator{\argmin}{argmin}
\def\Li{\par\leavevmode\par}
\title{Off-diagonal Symmetric Nonnegative Matrix Factorization} 
\date{}
\author{Fran\c{c}ois Moutier, Arnaud Vandaele, Nicolas Gillis\thanks{This work is supported by the Fonds de la Recherche Scientifique - FNRS and the Fonds Wetenschappelijk Onderzoek - Vlanderen (FWO) under EOS Project no O005318F-RG47, and by the  European  Research  Council (ERC  starting  grant  no  679515). 
Emails: \{francois.moutier,arnaud.vandaele,nicolas.gillis\}@umons.ac.be.} \\ 
	Department of Mathematics and Operational Research \\ 
	Facult\'e Polytechnique, Universit\'e de Mons \\ 
	Rue de Houdain 9, 7000 Mons, Belgium   
}
\begin{document}
	
	\maketitle

	\begin{abstract}
Symmetric nonnegative matrix factorization (symNMF) is a variant of nonnegative matrix factorization (NMF) that allows to handle symmetric input matrices and has been shown to be particularly well suited for clustering tasks.	
	In this paper, we present a new model, dubbed off-diagonal symNMF (ODsymNMF), that does not take into account the diagonal entries of the input matrix in the objective function. 
	ODsymNMF has three key advantages compared to symNMF. 
	First, ODsymNMF is theoretically much more sound as there always exists an exact factorization of size at most $\nicefrac{n(n-1)}{2}$ where $n$ is the dimension of the input matrix. Second, it makes more sense in practice as diagonal entries of the input matrix typically correspond to the similarity between an item and itself, not bringing much  information. 
	Third, it makes the optimization problem much easier to solve. In particular, it will allow us to design an algorithm based on coordinate descent that minimizes the component-wise $\ell_1$ norm between the input matrix and its approximation. 
	We prove that this norm is much better suited for binary input matrices often encountered in practice. We also derive a coordinate descent method for the component-wise $\ell_2$ norm, and compare the two approaches with symNMF on synthetic and document data sets. 
	\end{abstract}
	
	\textbf{Keywords.} 
	nonnegative matrix factorization, 
	clustering, 
	$\ell_1$ norm, 
	coordinate descent.

	\section{Introduction}
	
	Nonnegative matrix factorization (NMF) is a widely used  linear dimension reduction (LDR) technique which extracts useful information in images, documents, or more generally nonnegative data sets. 
	Given a nonnegative matrix $A \in \mathbb{R}^{m\times n}_+$ and a positive integer $r < \min(m,n)$, NMF aims at finding two nonnegative matrices $W\in \mathbb{R}^{m\times r}_+$ and $H\in \mathbb{R}^{n\times r}_+$ such that the low-rank matrix $WH^T$ approximates the input matrix $A$, which means that $A_{ij}\approx (WH^T)_{ij}$ for $i=1,...,m$ and $j=1,...,n$.
	The design and algorithmic implementation of refined NMF models for various applications is still a very active area of research; see~\cite{cichocki2009nonnegative,  xiao2019uniq,gillis2014and} and the references therein. 
	
	When the input matrix $A \in \mathbb{R}^{n\times n}_+$ is symmetric, it makes sense to look for a low-rank approximation which is symmetric as well.
	For this purpose,  symmetric nonnegative matrix factorization (symNMF) seeks a matrix $H\in \mathbb{R}^{n\times r}_+$ such that $HH^T$ approximates $A$, that is
	$A_{ij}\approx (HH^T)_{ij}$ for $1 \leq i,j \leq n$. 
	SymNMF is mainly used as a clustering method.
	In fact, the matrix $A$ usually represents the similarity measured between each pair of a set of $n$ elements. 
	The symNMF $HH^T$ of $A$ amounts to decomposing $A$ into $r$ rank-one factors 
	\[
	A \; \approx \; HH^T =  
	\sum_{k=1}^r H_{:,k}{H_{:,k}}^T. 
	\]
Since the rank-one factors are nonnegative, there is no cancellation and $A$ is approximated via the sum of $r$ rank-one nonnegative matrices. 
The non-zero entries of a rank-one factor correspond to a square submatrix of $A$ with mostly positive entries, that is, to a cluster within $A$ where all elements are highly connected. 	
	SymNMF has been used successfully in many different settings and was proved to compete with standard clustering techniques such as normalized cut, spectral clustering, k-means and spherical k-means; 
	see \cite{chen2008non,kuang2012symmetric, kuang2015symnmf,  long2007relational, yan2013learning, yang2012clustering, zass2005unifying} and the references therein.
	
	In order to find the matrix $H$, the symNMF problem is mainly tackled by solving the following optimization problem
	\begin{equation}
	\min_{H\geq 0}  \quad ||A-HH^T||^2_F,\label{symmodel}
	\end{equation}
	which is non-convex and NP-hard to solve~\cite{dickinson2014computational}. 
	Nevertheless, several local schemes were developed in order to obtain acceptable solutions--typically such algorithms are guaranteed to converge to first-order stationary points of~\eqref{symmodel}; see for example \cite{huang2013non, kuang2015symnmf, shi2017inexact,vandaele2016efficient}.

\paragraph{Outline and contribution of the paper} 
	
	In this work, we introduce a closely related variant of symNMF where the diagonal entries of the input matrix $A$ are not taken into account, that is, we are looking for a low-rank approximation $HH^T$ such that 
	\begin{equation}\label{odsymnmf}
	A_{ij} \approx (HH^T)_{ij} \quad \text{ for } i\neq j.
	\end{equation} 
It has to be noted that this idea has already been used in the context of approximation of correlation matrices~\cite{borsdorf2010computing}. However, the nonnegativity of the factor $H$ is not enforced, hence the problem is rather different, being a symmetric eigenvalue problem efficiently solvable. 
	
	Throughout this paper, we will refer to this problem as off-diagonal SymNMF (ODsymNMF) and focus on solving 
	\begin{equation}\label{odgeneral}
	\min_{H \geq 0} \|A-HH^T\|_{\text{OD},p} 
	\; \text{ where } \;  
			\|A-HH^T\|_{\text{OD},p} = \left(\sum_{i=1}^n\sum_{\substack{j=1\\j\neq i}}^n \left(A-HH^T\right)_{ij}^p\right)^{\frac{1}{p}}.
	\end{equation} 	
	Although this model might be surprising at first (one may say odd), we describe its advantages and why it is meaningful in practice in Section~\ref{Models}.
	In Section \ref{algorithms}, we develop two local  algorithms based on coordinate descent (CD) to tackle the cases $p=1$ and $p=2$. 
In Section~\ref{Initialization}, we propose an    initialization  scheme for ODsymNMF that is particularly crucial when $p=1$ as it is more sensitive to initialization than when $p=2$. 
	In Section~\ref{Numeric}, we perform some numerical experiments on synthetic and real examples (document data sets) highlighting the validity of the ODsymNMF model.

	\section{The why of ODsymNMF} \label{Models}

	In this section, we discuss the advantages of ODsymNMF compared to symNMF.   
	We also show that ODsymNMF for $p=1$ is an ideal model  in the rank-one case when $A$ is binary.

	\subsection{Advantages of ODsymNMF}

Let us describe the three most important advantages of 	ODsymNMF compared to symNMF.  
	
	\paragraph{From a practical point a view.} When the entries of $A$ correspond to the similarity between items, the detection of clusters is made more complicated by the overlap between clusters.
	As illustrated in the toy Example~\ref{ex1} below, the sum of the two desired clusters $H_{:,1}{H_{:,1}}^T$ and $H_{:,2}{H_{:,2}}^T$ is not equal to the input matrix $A$.
	In the case where the diagonal entries are not taken into account, then the decomposition of $A$ into 
	$H_{:,1}{H_{:,1}}^T+H_{:,2}{H_{:,2}}^T$ is exact in the sense that $\|A-HH^T\|_{\text{OD},p} = 0$.
	Since a diagonal entry represents the similarity between an item and itself, it should be a large value for most similarity measures. 
	In order to approximate these large values, the optimization in symNMF methods deteriorates the quality of the cluster detection (see Section~\ref{Numeric} where we show that ignoring the diagonal entries leads to a better clustering accuracy).

	\begin{example}\label{ex1} For the matrix 
		\[
		\underbrace{
		\left(\begin{array}{*{3}{c}}
			1 & 1 & 0\\
			1 & 1 & 1\\
			0 & 1 & 1
			\end{array}\right)}_{A} 
			\approx
			\underbrace{
			\left(
			\begin{array}{*{3}{c}}
			1 & 1 & 0\\
			1 & 1 & 0\\
			0 & 0 & 0
			\end{array}
			\right)
			}_{H_{:,1}{H_{:,1}}^T}
		+
		\underbrace{
		\left(
			\begin{array}{*{3}{c}}
			0 & 0 & 0\\
			0 & 1 & 1\\
			0 & 1 & 1
			\end{array}
			\right)
			}_{ H_{:,2}{H_{:,2}}^T}, 
			\]
			symNMF is unable to perfectly recover these two clusters (in fact, one eigenvalue of $A$ is negative hence symNMF cannot exactly reconstruct this matrix even for $r$ larger than two; see below), 
			while  ODsymNMF perfectly does so as it does not take into account diagonal entries.  
	\end{example}

	\paragraph{From a theoretical point of view.} 
	The \textit{cp-rank} of a matrix $A$ is the minimum positive integer $r$ such that there exists an exact factorization $A=HH^T$ where $H$ is an $n$-by-$r$ nonnegative matrix~~\cite{abraham2003completely}.  
	The \textit{cp-rank} of a symmetric matrix $A$ is said to be infinite when no exact factorization $HH^T$ exists for any value of $r$. This is the case for the matrix $A$ in Example~\ref{ex1} since $A$ has one negative eigenvalue, namely, $1-\sqrt{2}$, while all approximations of the form $HH^T$ are positive definite hence 
	\[
	\min_{H \in \mathbb{R}^{n \times r}_+} \| A-HH^T\|_F \; \geq \;  \sqrt{2}-1 
	\] 
	for any value of  $r$ (this follows from the Eckart-Young theorem).  
	On the contrary, ODsymNMF is much more sound as there always exists an exact factorization with $H$ having at most $K$ columns where $K$ is half the number of non-zero off-diagonal entries of $A$. In particular, when $A$ has only positive off-diagonal entries, we have $K=\frac{n(n-1)}{2}$. 
	Such a factorization is obtained by using a column $H_{:,\ell}$ for each pair of entries $A_{pq} = A_{qp} \neq 0$ such that, for $i \neq j$, 
	\[
	(H_{:,\ell} {H_{:,\ell}}^T)_{ij}  
	= 
	\begin{cases}
	A_{pq} = A_{qp} & \text{if } (i,j) \in \{ (p,q), (q,p) \},\\
	0         & \text{otherwise, }
	\end{cases} 
	\]  
	which can be achieved for example by choosing  
	\[
	H_{i,\ell}= 
	\begin{cases}
	1& \text{if } i=p,\\
	A_{pq} & \text{if } i=q,\\
	0              & \text{otherwise.}
	\end{cases} 
	\]
This amounts to decompose $A$ as the sum of $K$ clusters containing $2$ elements corresponding to each pair of non-zero entries.

	\paragraph{From an algorithmic point of view.} 
	One of the most widely used optimization scheme in matrix factorization is CD which consists in updating one variable at a time while considering the other ones fixed~\cite{wright2015coordinate}. 
	When applied to symNMF, CD requires to find the minimum of a univariate quartic non-convex polynomial, which can be done in $O(1)$~\cite{vandaele2016efficient}. 
	However, as pointed out in \cite{ shi2017inexact,vandaele2016efficient}, the drawback is that the convergence to a stationary point is not guaranteed since the minimum of the \textit{quartic} polynomial may not be unique. 
	As we show in Section \ref{algorithms}, using ODsymNMF makes the optimization problem easier to solve: the sub-problem in one entry of $H$ (the others being fixed) is a \textit{quadratic} optimization problem over the nonnegative orthant for which a closed-form solution exists. Moreover, since the optimal solution of these sub-problems is uniquely attained, convergence of CD to stationary points is 
	guaranteed~\cite{Bertsekas99b,Bertsekas99}.  
	Moreover, as the sub-problems of ODsymNMF are simpler, we will be able to design CD for another loss function, namely the component-wise $\ell_1$-norm which would be highly non-trivial for symNMF (see Section~\ref{algo-l1}).

	\subsection{Rank-one binary ODsymNMF} \label{Proofrank1}
	
In many applications, the matrix $A$ is binary 	
	hence it is implicitly assumed that the noise is also binary~\cite{zhang2007binary}. 
	For a low-rank binary input matrix and binary noise, the maximum likelihood estimator is the optimal solution of 
	\begin{equation}\label{l0norm}
	\min_{H \in \mathbb{R}^{n \times r}_+} \quad ||A-HH^T||_{\text{OD},0}, 
	\end{equation} 
where the $\ell_0$ norm counts the number of non-zero entries in $A-HH^T$, that is, the numbre of mistmatches between $A$ and $HH^T$.  	
	An advantage of this formulation is that it produces binary solutions; see Lemma~\ref{lemmal0}.  
	Such binary solutions allow easier interpretations for most applications. 
	However, it is not straightforward to design local schemes for~\eqref{l0norm} since the objective function is of combinatorial nature.
	A standard approach to deal with \eqref{l0norm} is to replace it with its convex surrogate, the $\ell_1$-norm, where we also relax the binary constraints on $H$:
	\begin{equation}\label{l1norm}
	\min_{H \in [0,1]^{n \times r}_+}\quad \|A-HH^T\|_{\text{OD},1}.
	\end{equation}
	
In the following, we prove that the problems in $\ell_0$ and $\ell_1$ norms, that is, \eqref{l0norm} and \eqref{l1norm}, are equivalent for $r=1$; 
see Theorem~\ref{theoremequiv}. Note that this equivalence was also proved in the asymmetric case, that is, for NMF~\cite{gillis2018complexity}. 
This means that the $\ell_1$ norm is particularly well suited for binary input matrices, much better than the $\ell_2$ norm which generates dense solutions. In fact, in the rank-one case, the optimal solution using the $\ell_2$ norm is always positive when $A$ is irreducible (that is, when the graph induced by $A$ is connected) which follows from the Perron-Frobenius theorem~\cite{berman1994nonnegative}; see also~\cite{gillis2018complexity} for a discussion. 
	

	The first lemma shows that a solution of~\eqref{l0norm} can always be transformed into a binary solution with lower objective function value; this observation is similar than in the unsymmetric case~\cite[Lemma~1]{gillis2018complexity}. 
	\begin{lemma}
		Let $h\in \mathbb{R}^n$ and let $A$ be a $n$-by-$n$ binary matrix. Applying the following simple transformation to $h$  
		\[
		\Phi (h_i) = \begin{cases}
		0\quad\text{ if }h_i=0\\
		1\quad\text{ otherwise}
		\end{cases} , 
		\]
		gives
		\begin{equation*}
		||A-\Phi(h)\Phi(h)^T||_{\text{OD},0} \leq||A-hh^T||_{\text{OD},0}.
		\end{equation*}
		\label{lemmal0}
	\end{lemma}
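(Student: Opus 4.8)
The plan is to prove the inequality term by term over the off-diagonal index set. Recall that $\|A-hh^T\|_{\text{OD},0} = \sum_{i\neq j} \mathbf{1}\{h_ih_j \neq A_{ij}\}$ and likewise $\|A-\Phi(h)\Phi(h)^T\|_{\text{OD},0} = \sum_{i\neq j} \mathbf{1}\{\Phi(h)_i\Phi(h)_j \neq A_{ij}\}$. It therefore suffices to show that for every fixed pair $(i,j)$ with $i\neq j$ one has $\mathbf{1}\{\Phi(h)_i\Phi(h)_j \neq A_{ij}\} \leq \mathbf{1}\{h_ih_j \neq A_{ij}\}$, and then sum over all such pairs.

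First I would dispose of the case where $h_i=0$ or $h_j=0$: then $h_ih_j = 0 = \Phi(h)_i\Phi(h)_j$, so the two indicators coincide whatever the value $A_{ij}\in\{0,1\}$. In the remaining case $h_i\neq 0$ and $h_j\neq 0$, we have $\Phi(h)_i=\Phi(h)_j=1$, hence $\Phi(h)_i\Phi(h)_j = 1$, while also $h_ih_j\neq 0$. Now I would invoke that $A$ is binary: if $A_{ij}=1$, the left-hand indicator is $\mathbf{1}\{1\neq 1\}=0$, which is at most anything; if $A_{ij}=0$, the left-hand indicator is $\mathbf{1}\{1\neq 0\}=1$, but then the right-hand indicator is $\mathbf{1}\{h_ih_j\neq 0\}=1$ as well. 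In all cases the pairwise inequality holds, and summing over off-diagonal pairs completes the argument.

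There is no genuine obstacle here; the argument is a finite case check exploiting only that $A$ is $\{0,1\}$-valued and that $\Phi$ preserves the support of $h$ pairwise. The single point worth a word of care is that $h$ is allowed to have negative entries, but this is harmless: a negative product $h_ih_j$ is still nonzero, hence still counted as a mismatch precisely when it differs from $A_{ij}\in\{0,1\}$ — exactly as treated above. This mirrors the unsymmetric statement \cite[Lemma~1]{gillis2018complexity}, the off-diagonal restriction only affecting the index set over which we sum.
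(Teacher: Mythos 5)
Your proof is correct and follows essentially the same route as the paper's: a pairwise case analysis on whether $h_ih_j=0$ or $h_ih_j\neq 0$, using that $A$ is binary to settle the latter case. Your term-by-term indicator formulation is in fact slightly cleaner than the paper's wording, and the remark about negative entries of $h$ being harmless is a valid point of care.
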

	\begin{proof}
		There are two cases
		\begin{enumerate}
			\item If $h_ih_j = 0$, then $\Phi(h_i)\Phi(h_j) = 0$ hence the transformation does not affect the approximation.
			\item If $h_ih_j\neq 0$, then $\Phi(h_i)\Phi(h_j) = 1$.
			If $A_{ij} = 0$ then $||A-\Phi(h)\Phi(h)^T||_{\text{OD},0}=||A-hh^T||_{\text{OD},0}=1$ while, if $A_{ij}=1$, $||A-hh^T||_{\text{OD},0} \geq ||A-\Phi(h)\Phi(h)^T||_{\text{OD},0} = 0$.
		\end{enumerate}
	\end{proof}

	
Lemma~\ref{lemmal0} implies that the optimal solution of~\eqref{l0norm} with $r=1$ can be assumed to be binary without loss of generality, using a simple transformation.    
	The second lemma below shows that the same observation applies to~\eqref{l1norm}.   
	\begin{lemma} \label{lemmal1} 
Let $h\in [0,1]^n$ and let $A$ be a $n$-by-$n$ binary matrix. There exists a simple transformation to $h$ (see the proof below) that generates a binary vector $h' \in \{0,1\}^n$
		such that  
\[
||A-h'h'^T||_{\text{OD},1} \leq ||A-hh^T||_{\text{OD},1}. 
\]  
	\end{lemma}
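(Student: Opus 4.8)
The plan is to transform $h$ into a binary vector by rounding its entries \emph{one coordinate at a time}, showing at each step that the objective cannot increase; this mirrors the argument used for NMF in~\cite[Lemma~1]{gillis2018complexity} and is the $\ell_1$ analogue of Lemma~\ref{lemmal0}.

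The observation that makes this work is that, for any $h \in [0,1]^n$ and any index $k$, the map $t \mapsto \|A - \tilde h \tilde h^T\|_{\text{OD},1}$ --- where $\tilde h$ equals $h$ except that its $k$-th entry is replaced by $t$ --- is \emph{affine} on $[0,1]$. To see this I would split the double sum $\sum_{i\neq j}|A_{ij} - \tilde h_i \tilde h_j|$ into the terms with $i,j\neq k$, which do not depend on $t$, and the remaining terms $|A_{kj} - t\,h_j|$ and $|A_{jk} - h_j\,t|$ with $j\neq k$. Because $t\in[0,1]$ and $h_j\in[0,1]$ we have $t\,h_j\le 1$, so $|A_{kj}-t\,h_j|$ equals $t\,h_j$ if $A_{kj}=0$ and $1-t\,h_j$ if $A_{kj}=1$; either way it is affine in $t$, and hence the whole objective is a sum of affine functions of $t$.

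From here the transformation is immediate: an affine function on $[0,1]$ is minimized at an endpoint, so replacing $h_k$ by whichever of $0$ or $1$ gives the smaller value does not increase the objective. Inspecting the slope (and using that $A$ is symmetric in our setting) shows this rule reads: set $h_k\leftarrow 1$ if $\sum_{j:\,A_{kj}=1}h_j \ge \sum_{j:\,A_{kj}=0}h_j$, and $h_k\leftarrow 0$ otherwise. I would then apply this rule successively for $k=1,2,\dots,n$; at each step the already-processed coordinates lie in $\{0,1\}\subseteq[0,1]$, so the affineness property still holds for the coordinate currently being processed, the objective is non-increasing throughout, and after $n$ steps we reach a binary $h'\in\{0,1\}^n$ with $\|A-h'h'^T\|_{\text{OD},1}\le\|A-hh^T\|_{\text{OD},1}$.

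The one place that needs care --- and really the only content of the argument --- is the affineness claim, which depends crucially on the box constraint $h\in[0,1]^n$ guaranteeing $t\,h_j\le 1$. On the full nonnegative orthant the univariate restriction would only be piecewise linear, with a breakpoint at $t=A_{kj}/h_j$ whenever $h_j>1$; this is exactly why the relaxation~\eqref{l1norm} of~\eqref{l0norm} is posed over $[0,1]^n$ rather than over $\mathbb{R}^{n\times r}_+$. (One could still push a vertex-optimality argument through in that case, but the bookkeeping is less clean.)
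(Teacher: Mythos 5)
Your proof is correct and follows essentially the same route as the paper's: both reduce to the univariate restriction in $h_k$ over $[0,1]$ and observe that its minimum is attained at $0$ or $1$ (the paper phrases this as the convex piecewise-linear function having all breakpoints $A_{jk}/h_j$ equal to $0$ or at least $1$, while you note more directly that $t h_j \le 1$ makes the restriction affine on $[0,1]$ — the same fact). Your coordinate-by-coordinate rounding, with the remark that iterates stay in $[0,1]^n$, matches the paper's sequential replacement of each non-binary entry.
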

	\begin{proof}
		Let $h \in [0,1]^n$, and let us show that we can transform it into a binary solution with lower objective function value. 
		For each $i\in\{1,...,n\}$ such that $h_i \notin \{0,1\}$, the terms of the objective function involving $h_i$ are 
		\begin{equation}
		f(h_i) = \sum_{j=1, j\neq i}^n \left|A_{ij} - h_i h_j\right|. 
		\label{eqnlemma2}
		\end{equation}
		The function \eqref{eqnlemma2} is piece-wise linear and convex hence minimizing it over the interval $[0,1]$ 
		leads to a global minimum equal to 0, 1, or one of the breakpoints $\frac{A_{ki}}{h_k}$ where 
		$k\in \{ j \ | \  j\neq i, h_j \neq 0\}$.
		Since $A$ is binary and $0 \leq h \leq 1$, we have that $\frac{A_{ki}}{h_k}$ is either equal to 0 or is larger than one.  Therefore, 0 or 1 is a global minimum of $f(h_i)$ over the interval $[0,1]$, and replacing $h_i$ by 0 or 1 will decrease the objective function. 
 	\end{proof}

Lemmas~\ref{lemmal0} and~\ref{lemmal1} imply that the $\ell_0$ and $\ell_1$ norm formulations of ODsymNMF are equivalent in the following sense. 
	
	\begin{theorem} \label{theoremequiv}
Any optimal solution 
of the rank-one problem~\eqref{l0norm} can be transformed into a binary optimal solution which is also optimal for the rank-one problem~\eqref{l1norm}, and vice versa. 
	\end{theorem}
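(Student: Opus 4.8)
The plan is to reduce everything to one elementary remark: \emph{on binary vectors the two objectives agree}. Indeed, if $h\in\{0,1\}^n$ and $A$ is binary, then each off-diagonal entry of $A-hh^T$ lies in $\{-1,0,1\}$, so $\|A-hh^T\|_{\text{OD},0}=\|A-hh^T\|_{\text{OD},1}$. I would also note at the outset that the optimal value of the rank-one problem~\eqref{l0norm} is attained (the objective takes only finitely many integer values), and likewise for~\eqref{l1norm} (a continuous objective on the compact set $[0,1]^n$); write $v_0$ and $v_1$ for these optimal values.

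First I would show $v_1\le v_0$. Take any optimal $h^\star$ of~\eqref{l0norm}. By Lemma~\ref{lemmal0}, $h_0:=\Phi(h^\star)$ is binary and has $\ell_0$-value at most $v_0$, hence exactly $v_0$ by optimality, so $h_0$ is an optimal solution of~\eqref{l0norm}. Since $h_0\in\{0,1\}^n\subseteq[0,1]^n$ it is feasible for~\eqref{l1norm}, and by the remark its $\ell_1$-value equals its $\ell_0$-value, namely $v_0$; hence $v_1\le v_0$. For the reverse inequality $v_0\le v_1$, take an optimal $h$ of~\eqref{l1norm} and apply Lemma~\ref{lemmal1} to obtain a binary $h'\in\{0,1\}^n$ with $\|A-h'h'^T\|_{\text{OD},1}\le v_1$; by the remark this is also its $\ell_0$-value, and since $h'\in\mathbb{R}^n_+$ is feasible for~\eqref{l0norm} we get $v_0\le v_1$. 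Therefore $v_0=v_1$.

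Finally I would read off the two transformations. Starting from an optimal $h^\star$ of~\eqref{l0norm}: the binary $h_0=\Phi(h^\star)$ constructed above has $\ell_1$-value $v_0=v_1$ and is feasible for~\eqref{l1norm}, hence optimal for it. Starting from an optimal $h$ of~\eqref{l1norm}: the binary $h'$ produced by Lemma~\ref{lemmal1} has $\ell_1$-value at most $v_1$, hence exactly $v_1=v_0$, so its $\ell_0$-value is $v_0$ and $h'$ is optimal for~\eqref{l0norm}. There is no real obstacle here: the whole argument is a short sandwich once the $\ell_0=\ell_1$ identity on binary vectors is observed; the only point needing a line of care is that a binary vector is simultaneously feasible for both problems (it lies in $\mathbb{R}^n_+$ and in $[0,1]^n$), which makes the two lemmas compose cleanly.
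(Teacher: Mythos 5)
Your proof is correct and follows essentially the same route as the paper's: apply Lemmas~\ref{lemmal0} and~\ref{lemmal1} to binarize, then use the observation that $\|A-hh^T\|_{\text{OD},0}=\|A-hh^T\|_{\text{OD},1}$ when the residual has entries in $\{-1,0,1\}$. You simply spell out the value-sandwich $v_0=v_1$ and the feasibility checks more explicitly than the paper does.
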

	\begin{proof}
		By Lemmas~\ref{lemmal0} and~\ref{lemmal1}, we know that we can transform any solution into a binary solution with smaller objective function value. 
		For these binary solutions, the entries of the residual $P=A-hh^T$ belong to $\{-1,0,1\}$.  
		Since $\|P\|_{\text{OD},0}=\|P\|_{\text{OD},1}$ for any matrix $P\in\{-1,0,1\}^{n\times n}$, the binary optimal solutions for one problem are also optimal for the other problem. 
	\end{proof}
	
	Theorem~\ref{theoremequiv} shows that the $\ell_1$ relaxation~\eqref{l1norm} is particularly well suited for binary input matrices. In Section~\ref{algo-l1}, we design a CD scheme for this problem and, in Section~\ref{Numeric}, we illustrate this observation with some numerical experiments, showing that it outperforms the $\ell_2$ norm in this scenario.

	\section{Coordinate descent schemes for ODsymNMF}\label{algorithms}
	
	Coordinate descent (CD) is among the most intuitive methods to solve optimization problems~\cite{wright2015coordinate}.
	At each iteration, all variables are fixed but one which is then optimized exactly or inexactly depending on the difficulty of the corresponding univariate problem.
	For symNMF~\eqref{symmodel} using the Frobenius norm, when all entries of $H$ are fixed except one, the optimal value of the univariate problem is the root of a polynomial of the type $x^3+ax+b$ which can be computed in closed-form~\cite{vandaele2016efficient}. 
	
	Let us introduce our general CD framework for ODsymNMF. If we optimize the $(k,l)$th entry of $H$, 
	the univariate problem to solve is the following
	\begin{equation}
	\min_{H_{k,l}\geq 0} \left(\sum_{i\neq j}\left(A_{i,j}-\left(\sum_{t=1}^r H_{:,t}H_{:,t}^T\right)_{i,j}\right)^p\right)^{\frac{1}{p}}. \label{generalsymnmf}
	\end{equation}
	To simplify the presentation, let us focus on one rank-one factor, say $H_{:,l}{H_{:,l}}^T$, and denote $P$ the residual matrix $P=A-\sum_{t=1,t\neq l}^{r} H_{:,t}{H_{:,t}}^T$ corresponding to this factor. Let us also denote the vector $h = H(:,l)$. When optimizing the entries of $h = H(:,l)$ in CD, we face the following rank-one ODsymNMF problem: 
	\begin{equation}
	\min_{h\geq 0} \left(\sum_{i\neq j}\left(P_{i,j}-h_ih_j\right)^p\right)^{\frac{1}{p}}. \label{rank1symnmf}
	\end{equation}
	CD can be applied by solving iteratively rank-one ODsymNMF problems for each column $H_{:,l}$ with $l=1,...,r$ where the entries of each column are themselves solved via CD; see Algorithm \ref{algo1}. 
	
	\begin{algorithm}
		\caption{$\quad H = ODsymNMF(A,H_0)$}\label{algo1}
		\begin{algorithmic}[1]
			\item INPUT: $A\in \mathbb{R}^{n\times n}$, $H_0\in \mathbb{R}^{n\times r}_+$
			\item OUTPUT: $H\in \mathbb{R}^{n\times r}_+$
			\State $H \gets H_0$
			\State $R \gets A-HH^T$
			\While{stopping criterion not satisfied}
			\For{$l=1:r$}
			\State $P \gets R+H_{:,l}{H_{:,l}}^T$ \label{algo1-l7}
			\State $H_{:,l} \gets rank\_one\_ODsymNMF(P,H_{:,l})$ \label{algo1-l8}
			\State $R \gets P-H_{:,l}{H_{:,l}}^T$ \label{algo1-l9}
			\EndFor
			\EndWhile
		\end{algorithmic}
	\end{algorithm}
	\FloatBarrier
	
	It remains to show how to apply CD to rank-one ODsymNMF.  
	In the next two subsections, we will see how to do so  for the Frobenius-norm ($p=2$) and for the component-wise $\ell_1$-norm ($p=1$).

	\subsection{ODsymNMF with the Frobenius norm} \label{frobnorm}
	
	When $p=2$ in the optimization problem \eqref{rank1symnmf}, we are looking for the solution minimizing the least-squares error between $P$ and its rank-one approximation $hh^T$ without taking into account the diagonal entries.
	This problem can be written as
	\begin{equation}
	\min_{h\geq 0} f(h), \quad \text{ where } f(h) =  \frac{1}{4} \sum_{i=1}^n\sum_{\substack{j=1\\j\neq i}}^n\left(P_{i,j}-h_ih_j\right)^2. \label{rank1symnmffro}
	\end{equation}
	For the $k$th entry of $h$, with $k\in\{1,...,n\}$, the objective function can be decomposed as follows 
	\begin{equation}
	f(h)  = \frac{1}{4} \sum_{\substack{i=1\\i\neq k}}^n\sum_{\substack{j=1\\j\neq i\\j\neq k}}^n\left(P_{i,j}-h_ih_j\right)^2 + \frac{1}{4} \sum_{\substack{j=1\\j\neq k}}^n\left(P_{k,j}-h_kh_j\right)^2 + \frac{1}{4} \sum_{\substack{i=1\\i\neq k}}^n\left(P_{i,k}-h_ih_k\right)^2. \label{rank1symnmffrodecompose}
	\end{equation} 
	Since the matrix $P$ is symmetric, the last two terms of the right-hand side of \eqref{rank1symnmffrodecompose} are equal to one another. 
	This expression shows that the sub-problem in the entry $h_k$ is a quadratic optimization problem whose optimal solution is either $0$ or the single root of the equation ${\nabla f(h)}_k=0$ where $\nabla f(h)$ represents the gradient of $f(h)$.
	We have
	\begin{equation}  \label{gradient} 
	{\nabla f(h)}_k = \sum_{\substack{j=1\\j\neq k}}^n (h_kh_j^2-P_{k,j}h_j) = a_kh_k-b_k
	\end{equation} 
	where $a_k=\|h\|^2_2-h_k^2$ and $b_k=h^TP_{:,k} - h_kP_{k,k}$. 
	The optimal value $h_k^{+}$ that minimizes   (\ref{rank1symnmffrodecompose}) over the nonnegative orthant is
	\begin{align} \label{hk} 
	h_k^+ = \max\left(0,\frac{b_k}{a_k}\right). 
	\end{align} 
	Due to the computation of $a_k$ and $b_k$, the update of one variable with \eqref{hk} can be done in $\mathcal{O}(n)$.
	Therefore, Algorithm \ref{algo1} runs in $\mathcal{O}(n^2r)$ for updating once the $nr$ entries of $H$ since lines \ref{algo1-l7}, \ref{algo1-l8} and \ref{algo1-l9} run each in $\mathcal{O}(n^2)$.
	However, Algorithm \ref{algo1} requires to store the residual matrices $P$ and $R$ which have $\mathcal{O}(n^2)$ entries.
	Even when the matrix $A$ is sparse, these residual matrices  are usually dense which leads to a memory cost of $\mathcal{O}(n^2)$.  
	In the following, we show how to tackle the case of large sparse matrices more efficiently by avoiding the computation of  $P$ and $R$, reducing 
	the computational costs to $\mathcal{O}(Kr)$ and 
	the memory cost to 
	$\mathcal{O}(K)$ where $K$ is the number of nonzero entries of $A$.

\paragraph{Avoiding the explicit computation of the residual matrix} 
In order to compute~\eqref{hk}, we need to compute $a_k$ and $b_k$ that depend on $P$. 
After some calculations by simply expanding $P$, we obtain that the optimal solution for $h_{k,l}$, all other variables being fixed, is given by 
\[
h_{k,l}^+ = \max \left( 0 , \frac{b_{k,l}}{a_{k,l}} \right),  
\]  
where $a_{k,l} = \|H_{:,l}\|_2^2-H_{k,l}^2$ and 
\[ 
	b_{k,l} =  {H_{:,l}}^TA_{:,k}-{H_{:,l}}^T(HH^T)_{:,k}-H_{k,l}(A_{k,k}+H_{k,l}^2-\|H_{:,l}\|_2^2-\|H_{k,:}\|_2^2).
	\] 
	Algorithm~\ref{algo2} uses these expressions to avoid the computation of $P$ and $R$, but produces the same output as  Algorithm~\ref{algo1}.
	\begin{algorithm}
		\caption{$\quad H = \text{ODsymNMF-}\ell_2(A,H_0)$}\label{algo2}
		\begin{algorithmic}[1]
			\item INPUT: $A\in \mathbb{R}^{n\times n}$, $H_0\in \mathbb{R}^{n\times r}_+$
			\item OUTPUT: $H\in \mathbb{R}^{n\times r}_+$
			\State $H \gets H_0$
			\For{$l=1:r$} \label{algo3-line4}
			\State $C_l \gets ||H_{:,l}||_2^2$
			\EndFor
			\For{$k=1:n$}
			\State $L_k \gets ||H_{k,:}||_2^2$
			\EndFor \label{algo3-line9}
			\State $D \gets H^TH$ \label{algo3-line10}
			\While{stopping criterion not satisfied}
			\For{$l=1:r$}
			\For{$k=1:n$}
			\State $a_{k,l} \gets C_l-H_{k,l}^2$
			\State $b_{k,l} \gets (H_{:,l})^TA_{:,k} - H_{k,:}D_{:,l} + H_{k,l}(C_l + L_k - A_{k,k} - H_{k,l}^2)$
			\State $H_{k,l}^+ \gets \max(0,\frac{b_{k,l}}{a_{k,l}})$
			\State $C_l \gets C_l + (H_{k,l}^+)^2 - H_{k,l}^2$
			\State $L_k \gets L_k + (H_{k,l}^+)^2 - H_{k,l}^2$
			\State $D_{l,:}\gets D_{l,:} - H_{k,:}H_{k,l} + H_{k,:}^+H_{k,l}^+$
			\State $D_{:,l}\gets (D_{l,:})^T$
			\EndFor
			\EndFor
			\EndWhile
		\end{algorithmic}
	\end{algorithm}
	
	Let us analyse the computational cost and memory of  Algorithm~\ref{algo2}. 
	The precomputations of $\|H_{k,:}\|^2_2$, $\|H_{:,l}\|_2^2$ in $\mathcal{O}(nr)$ (see lines \ref{algo3-line4}-\ref{algo3-line9}) and of $D=$ in $\mathcal{O}(nr^2)$ (see line \ref{algo3-line10}) allow to compute the optimal value $H_{k,l}^+$ in $\mathcal{O}(n)$ when $A$ is dense due to the product ${H_{:,l}}^TA_{:,k}$.
	It is therefore possible to apply one iteration of CD in $\mathcal{O}(n^2r)$ operations.
	When $A$ contains $K$ nonzero entries, the computational complexity drops to $\mathcal{O}\left(r\max\left(K,nr\right)\right)$ since the computation of ${H_{:,l}}^TA$ can be done in
	$\mathcal{O}(K)$ operations.
	This result implies that that when $K=\mathcal{O}(n)$, which is the case for sparse matrices, 
	Algorithm~\ref{algo2} runs in $\mathcal{O}(nr^2)$ operations per iteration. 
	In terms of memory, Algorithm~\ref{algo2} only need to store $A$ and $H$, for a cost of $\mathcal{O}(K + nr)$.


	\subsection{ODsymNMF with the component-wise $\ell_1$-norm}\label{algo-l1}
	The $\ell_1$-norm is usually used to tackle Laplacian noise but is also a well-known surrogate of the $\ell_0$-norm in the presence of binary noise.
	In fact, we showed in Section \ref{Proofrank1} that for the ODsymNMF model using the $\ell_1$-norm is  equivalent to using the $\ell_0$-norm in the rank-one case. 
	For symNMF with the $\ell_1$-norm, the univariate problem arising when using CD is a sum of absolute value of quadratic terms. 
	Such a function is non-convex in general, making it difficult to optimize within a CD method. 
	With ODsymNMF, the quadratic terms disappear and we obtain a univariate convex problem. 
	When $p=1$ in~\eqref{rank1symnmf}, we have to minimize a sum of absolute values: 
	\begin{equation}
	\min_{h\geq 0} f(h), \quad \text{ where } f(h) =  \frac{1}{2} \sum_{i=1}^n\sum_{\substack{j=1\\j\neq i}}^n\left|P_{i,j}-h_ih_j\right|. \label{rank1symnmfl1}
	\end{equation}
	As with the $\ell_2$-norm, let us focus on the $k$th variable: we have 
	\begin{equation}
	f(h)  = \frac{1}{2} \sum_{\substack{i=1\\i\neq k}}^n\sum_{\substack{j=1\\j\neq i\\j\neq k}}^n\left|P_{i,j}-h_ih_j\right| + \frac{1}{2} \sum_{\substack{j=1\\j\neq k}}^n\left|P_{k,j}-h_kh_j\right| + \frac{1}{2} \sum_{\substack{i=1\\i\neq k}}^n\left|P_{i,k}-h_ih_k\right|. \label{rank1symnmfl1decompose}
	\end{equation} 
	The first term of the right-hand side of \eqref{rank1symnmfl1decompose} does not involve $h_k$, and the last two terms are equal when $P$ is symmetric.
	Hence the terms containing $h_k$ in the objective function $f(h)$ are $\sum_{{i=1,i\neq k}}^n\left|P_{i,k}-h_ih_k\right|$.
Hence, defining $a \in \mathbb{R}^{n - 1}$ as 
$a=h(\mathcal{K})$ where $\mathcal{K} = \{1,2,\dots,n\} \backslash \{k\}$, and $b \in \mathbb{R}^{n - 1}$ as 
$b = P(k,\mathcal{K})$, finding the optimal value of $h_k$ requires solving 
	\begin{equation} \label{weightedeqn} 
	\min_{x\geq 0}  \quad \sum_{i=1}^n |a_ix-b_i|. 
	\end{equation}
	The objective is a convex piecewise linear non-differentiable function, and this problem is a constrained weighted median problem. 
	There exists an algorithm in $\mathcal{O}(n)$ operations to solve the weighted median problem~\cite{gurwitz1990weighted}.  
	In the constrained case, because (\ref{weightedeqn}) is  convex, if the optimal solution $x^*$ is negative, 
	we can replace it by zero to obtain the optimal solution. 
	For the sake of completeness, 
	Algorithm~\ref{algo_cons_weightedmedian} in 
	Appendix~\ref{appendix_weighted_median} presents a simple algorithm for this constrained weighted median problem running in $\mathcal{O}(n \log n)$ operations (as it requires sorting the entries of a vector of length $n$). 
	Finally, 
	Algorithm~\ref{algo3} summarizes our algorithm for the rank-one ODsymNMF with $\ell_1$-norm.  
	
	\begin{algorithm}
		\caption{$\quad h = rank\_one\text{ODsymNMF-}\ell_1(P,h_0)$}\label{algo3}
		\begin{algorithmic}[1]
			\item INPUT: $P\in \mathbb{R}^{n\times n}$, $h_0\in \mathbb{R}^{n}_+$
			\item OUTPUT: $h\in \mathbb{R}^{n}_+$
			\State $h \gets h_0$
			\For{$k=1:n$}
			\State $\mathcal{K} = \{1,2,\dots,n\} \backslash \{k\}$ 
			\State $a \gets h(\mathcal{K})$  
			\State $b \gets P(k,\mathcal{K})$ 
			\State $h_k^+ \gets \texttt{constrained\_weighted\_median}(a,b)$ \% see Algorithm \ref{algo_cons_weightedmedian}
			\EndFor
		\end{algorithmic}
	\end{algorithm}
	\FloatBarrier
	
	Since Algorithm~\ref{algo_cons_weightedmedian} requires $\mathcal{O}(n \log n)$ operations, Algorithm \ref{algo3} runs in $\mathcal{O}(n^2 \log n)$. As noted above, the $\log n$ factor could be removed by using the weighted median algorithm from~\cite{gurwitz1990weighted}.  
	Overall, updating once each entry of $H$ using the $\ell_1$-norm when the residual matrix $R$ is available has a computational complexity of $\mathcal{O}\left(n^2r \log n\right)$ operations which is the same as for the $\ell_2$-norm, up to the logarithmic factor.

	\paragraph{Avoiding the explicit computation of the residual matrices}

As for the $\ell_2$ norm, in case of a sparse input matrix $A$, we would like to avoid the computation of the residual matrices $P$ and $R$. 
Similarly as for the $\ell_2$ norm, we substitute the expression $P=A-\sum_{t=1,t\neq l}^{r} H_{:,t}{H_{:,t}}^T$ in the updates of $H_{k,l}$; see Algorithm~\ref{algo4} for the details. 
Since the residual matrix is not stored, the main difference lies in the computation of the terms $\sum_{t=1,t\neq l}^{r} H_{i,t}H_{k,t}$ which occurs $\mathcal{O}(n)$ times for the update of one entry. 
	The overall computational complexity of Algorithm~\ref{algo4} is therefore $\mathcal{O}(n^2r^2)$ operations.
	As opposed to Algorithms \ref{algo1} and \ref{algo3} running in $\mathcal{O}(n^2r)$ operations, avoiding the storage of a $n$-by-$n$ dense matrix increases the computational cost.
	Moreover, unfortunately, the $\ell_1$-norm does not allow the sparsity of the input matrix $A$ to have any kind of effect in the overall complexity because $A$ is never multiplied by any other matrix during the updates (see lines \ref{algol3-line5}-\ref{algol3-line15} in Algorithm \ref{algo4}). 
	In summary, we can reduce the memory cost to $\mathcal{O}(K)$, while the computational cost slightly increases, to $\mathcal{O}(n^2r^2)$ operations. 
	
	\begin{algorithm}
		\caption{$\quad H = \text{ODsymNMF-}\ell_1(A,H_0)$}\label{algo4}
		\begin{algorithmic}[1]
			\item INPUT: $A\in \mathbb{R}^{n\times n}$, $H_0\in \mathbb{R}^{n\times r}_+$
			\item OUTPUT: $H\in \mathbb{R}^{n\times r}_+$
			\State $H \gets H_0$
			\While{stopping criterion not satisfied}
			\For{$l=1:r$} \label{algol3-line5}
			\For{$k=1:n$}
			\For{$i=1:n$}
			\State $a_i \gets H_{i,l}$
			\State $b_i \gets A_{i,k} - H_{k,:}{H_{i,:}}^T + H_{i,l}H_{k,l}$
			
			\EndFor
			\State $\mathcal{K} = \{1,2,\dots,n\} \backslash \{k\}$
			\State $H_{k,l}^+\gets \texttt{constrained\_weighted\_median}(a(\mathcal{K}),b(\mathcal{K}))$ \% see Algorithm \ref{algo_cons_weightedmedian}
			\EndFor
			\EndFor \label{algol3-line15}
			\EndWhile
		\end{algorithmic}
	\end{algorithm}
	\FloatBarrier
	
	
\subsection{Summary and convergence of the algorithms}
	
	Table~\ref{tab:my_label} summarizes the complexity of the algorithms proposed in this section. 
	The three lines of the table concerns respectively:
	\begin{itemize}
		\item the problem \eqref{odgeneral} for $p=2$, denoted ODsymNMF-$\ell_2$ and solved with Algorithm \ref{algo2},
		\item the problem \eqref{odgeneral} for $p=1$, denoted ODsymNMF-$\ell_1$ and solved with Algorithms \ref{algo1} and \ref{algo3} where a residual matrix is used,
		\item the problem \eqref{odgeneral} for $p=1$, denoted ODsymNMF-$\ell_1$ and solved with Algorithm \ref{algo4} where the use of a residual matrix is avoided.
	\end{itemize}

\begin{table}[ht]
	
	\centerline{
		\begin{tabular}{c||cc|cc|cc}
			\multicolumn{1}{c||}{\multirow{3}{*}{}} & \multicolumn{2}{c|}{General form} & \multicolumn{2}{c|}{Dense case} & \multicolumn{2}{c}{Sparse case}\\
			&&&  \multicolumn{2}{c|}{$K = $ \comp{n^2}}   &\multicolumn{2}{c}{$K = $ \comp{n}}\\			
			\cline{2-7}
			&\# flops & memory &\# flops & memory &\# flops & memory  \\
			\hline
			ODsymNMF-$\ell_2$  & \multirow{2}{*}{\comp{r\max(K,nr)}} & \multirow{2}{*}{\comp{\max(K,nr)}} & \multirow{2}{*}{\comp{n^2r}} & \multirow{2}{*}{\comp{n^2}} & \multirow{2}{*}{\comp{nr^2}} & \multirow{2}{*}{\comp{nr}} \\
			Algo. \ref{algo2} &&&&&&\\
			\hline
			ODsymNMF-$\ell_1$   & \multirow{2}{*}{\comp{n^2r}} & \multirow{2}{*}{\comp{n^2}} & \multirow{2}{*}{\comp{n^2r}} & \multirow{2}{*}{\comp{n^2}} & \multirow{2}{*}{\comp{n^2r}} & \multirow{2}{*}{\comp{n^2}}\\
			Algo. \ref{algo1} and \ref{algo3} &&&&&&\\
			\hline
			ODsymNMF-$\ell_1$  & \multirow{2}{*}{\comp{n^2r^2}} & \multirow{2}{*}{\comp{\max(K,nr)}} & \multirow{2}{*}{\comp{n^2r^2}} & \multirow{2}{*}{\comp{n^2}} & \multirow{2}{*}{\comp{n^2r^2}} & \multirow{2}{*}{\comp{nr}}\\
			Algo. \ref{algo4} &&&&&&
	\end{tabular}}
	\caption{Summary of the computational and memory  complexities.
	}
	\label{tab:my_label}
\end{table}


\paragraph{Convergence} The result~\cite[Proposition 2.7.1]{Bertsekas99, Bertsekas99b} guarantees that every limit point of an exact cyclic CD is a stationary point, given that 

\begin{enumerate}

\item the objective function is continuously differentiable,  

\item each block of variables is required to belong to a closed convex set,

\item the minimum computed at each iteration for a given block of variables is uniquely attained, and 

\item the objective function values in the interval between all iterates and the next (which is obtained by updating a single block of variables) is monotonically decreasing. 	
	
	\end{enumerate}
	
	For the $\ell_2$ norm, the subproblems in one variable are quadratic problems in one variable (see above) hence the four conditions above are satisfied. Therefore every limit point of Algorithm~\ref{algo2} is a stationary point. Note that there is at least one limit point since Algorithm~\ref{algo2} decreases the objective function monotonocally, 
	and the level sets of ODsymNMF-$\ell_2$, that is, $\{ H \geq 0 \ | \ \|A - HH^T \|_{\text{OD},2} \leq c\}$ for some constant $c$, are compact (Bolzano-Weierstrass theorem).

 For the $\ell_1$ norm, differentiability does not hold hence we can only guarantee the convergence of the objective function values (which decreases monotonically and is bounded below), as well as the existence of a limit point, as for the $\ell_2$ norm. 
In fact, for non-differentiable objectives, counter examples exist even when all the other assumptions above are satisfied; 
see for example~\cite[Example 14.5]{beck2017first}.

\section{A new initialization scheme for ODsymNMF}\label{Initialization}
	
	
	
	In this section we discuss the initialization of ODsymNMF algorithms, and propose a new very efficient greedy initialization scheme. 
	
	As far as we know, the only two strategies to initialize $H$ in symNMF are either at 
	random~\cite{kuang2015symnmf}  (e.g., using the uniform distribution in the interval [0,1] for each entry of $H$) or 
	with the zero matrix of appropriate dimension~\cite{vandaele2016efficient}. However, these initializations have some drawbacks: 
	\begin{itemize}
	
		\item When initializing $H$ randomly, the first iterates are trying at first to approximate a matrix which is highly perturbed ($A - \sum_{k=2}^{r} H_{:k}H_{:k}^T$) with a randomly generated matrix ($H_{:1}H_{:1}^T$) which is not very reasonable. Hence the first steps are wasting the global computational effort. 
		
		
		\item When initializing $H$ with the zero matrix, the solution found has a particular structure where the first factor is dense and the other ones are sparser.
		The reason is that the first factor is given more importance since it is optimized first hence it will be close to the best rank-one approximation of $A$~\cite{vandaele2016efficient}. 
		
	\end{itemize}


We propose in the following a greedy strategy that adapts to the norm used, that does not have the drawbacks mentioned above while having a low computational cost (roughly $r$ iterations of our CD methods). 
It consists in constructing each column of $H$ sequentially by selecting non-zero entries  depending on the non-zero entries of $A$. Our approach is summarized in Algorithm~\ref{greedyinit}. 
It works as follows, the matrix $H$ is inialized with the zero matrix, and a residual matrix $R$ is initialized as the input matrix $A$ and will be updated after each column of $H$ is constructed (steps 3 and 4). 
The columns of $H$ are computed sequentially by repeating the following steps: for $j=1,...,r$, 
	
	\begin{itemize}
	
		\item Initialization. The weighting vector $w$ is set as the vector of all ones of dimension $n$, and the index set $J$ as the empty set (steps 6 and 7). The vector $w$ will represent the importance of the entries of $H(:,j)$ while the index set $J$ will correspond to the non-zero entries of $H(:,j)$. 
		
		\item Loop over $\{1,2,\dots,n\}$: 

\begin{itemize}

\item Find the most important element, denoted $k$: it is the element maximizing $Rw$, that is, $k = \argmax_j (Rw)_j$ (step 11). 
Note that the very first time this loop is entered, $Rw$ is the sum of the entries in the rows of $A$ so the first element selected is the element corresponding to the row of $A$ with the largest $\ell_1$ norm. 
This element $k$ is added to $J$ (step 12).

\item The entry $H_{k,j}$ is then updated optimally by taking into account the information already contained in the cluster, that is, $R_{J,J}$ and $H_{J,j}$.
		That way the value obtained for $H_{k,j}$ is based on the values already updated, and is optimized according with the closed-form solutions derived in the previous sections (step 17).    
		For the first updated entry of a column, we set $H_{k,j}=1$ (step 14).
		
		\item The weighting vector $w$ is updated:  it is equal to the sum of the columns of $A$ in the index set $J$, that is, $w = \sum_j A(:,J)$ (step 15 or 18).  
		This step is particularly important in the first few iterations because the weighting vector $w$ has a strong impact on the selection process. It allows to add indices in $J$ highly connected to the indices already in $J$.

\end{itemize}		
		
		\item The residual is updated (step 21). 
		
	\end{itemize}

	
	
	\begin{algorithm}
		\caption{$\quad H = Greedy\_init(A,r)$}\label{greedyinit}
		\begin{algorithmic}[1]
			\item INPUT: $A\in \mathbb{R}^{n\times n}$, $r\in \mathbb{N}^+$
			\item OUTPUT: $H\in \mathbb{R}^{n\times r}_+$
			\State $H\gets 0^{n\times r}$
			\State $R\gets A$
			\For{$j=1:r$}
			\State $w\gets 1^n$
			\State $J \gets \{\}$
			\For{$i=1:n$}
			\State $s\gets Rw$ \label{RW}
			\State $s_J\gets -\infty$
			\State $[m,k]\gets max(s)$
			\State $J \gets J \cup \{k\}$ 
			\If{$i=1$}
			\State $H_{k,j}\gets 1$
			\State $w\gets A_{:,k}$
			\Else
			\State $H_{k,j}\gets Optimize\_hk(R_{J,J},H_{J,j},k)$
			\State $w\gets w + A_{:,k}$ \label{updateW}
			\EndIf
			\EndFor
			\State $R\gets R-H_{:,j}(H_{:,j})^T$
			\EndFor
		\end{algorithmic}
	\end{algorithm}

\noindent
The computational complexity of Algorithm \ref{greedyinit} is \comp{rn^3}, which makes it too expensive in most applications; and this is not desirable: the initialization scheme should have a low computational cost compared to the optimization scheme. 
This heavy computational cost comes from step \ref{RW} where the computation of the product between $R$ and $w$ is \comp{n^2}.
However, during the last iterations, it becomes less and less necessary to compute this product since the update of the weighting vector $w$ in step \ref{updateW} has a diluted effect, rendering the step \ref{RW} less useful. 
	Therefore, we found that updating $w$ only a small number of times works as well in practice. In particular, using a multiple of the rank $r$ (typically $2r$) works very well.  
Moreover, to keep the spatial complexity low, the residual matrix $R$ is not computed explicitly (as in our CD schemes).
These two changes lead to Algorithm~\ref{greedyinit2}: it is the same as Algorithm~\ref{greedyinit} except that $s$ is only updated for $2r$ iterations, while $R$ is not computed explicitly.  
	\begin{algorithm}
		\caption{$\quad H = Greedy\_init(A,r,p)$}\label{greedyinit2}
		\begin{algorithmic}[1]
			\item INPUT: $A\in \mathbb{R}^{n\times n}$, $r\in \mathbb{N}^+$, $p\in \{1,2\}$
			\item OUTPUT: $H\in \mathbb{R}^{n\times r}_+$
			\State $H\gets 0^{n\times r}$
			\For{$j=1:r$}
			\State $w\gets 1^n$
			\State $J \gets FALSE^n$
			\State $C_j \gets 0$
			\For{$i=1:n$}
			\If{$i<2r$}
			\State $s\gets Aw - H_{:,1:j-1} \left(H_{:,1:j-1}^Tw\right)$ \label{step10newalgo}
			\State $s_J\gets -\infty$
			\Else
			\State $s_J \gets -\infty$ \label{step13newalgo}
			\EndIf
			\State $[m,k]\gets max(s)$
			
			\If{$i=1$}
			\State $H_{k,j}\gets 1$
			\State $w\gets A_{:,k}$
			\Else
			\If{$p=2$}
			\State $b \gets H_{J,j}^TA_{J,k} - \left(H_{J,j}^TH_{J,1:j-1}\right)H_{k,1:j-1}^T $ \label{step21newalgo}
			\If{$b>0$}
			\State $H_{k,j}^+ \gets \frac{b}{C_j}$
			\Else
			\State $H_{k,j}^+\gets 0$
			\EndIf \label{step26newalgo}
			\Else
			\State $R_{J,k} \gets A_{J,k} - H_{J,1:j-1}H_{k,1:j-1}^T$ \label{step28newalgo}
			\State $H_{k,j}^+ \gets \texttt{constrained\_weighted\_median}(R_{J,k},H_{J,j})$ \%see Algorithm \ref{algo_cons_weightedmedian} \label{step29newalgo}
			
			\EndIf
			\State $w\gets w + A_{:,k}$
			\EndIf
			\State $J_k\gets TRUE$
			\State $C_j \gets C_j + H_{k,j}^2$
			\EndFor
			\EndFor
		\end{algorithmic}
	\end{algorithm}

\noindent
	The overall computational cost of Algorithm~\ref{greedyinit2} is \comp{r^2n^2} operations so it represents $r$ iterations of Algorithms \ref{algo2} and \ref{algo3}.
	Unfortunately the sparsity of the input has no impact on the computational cost, but the replacing of the residual matrix reduces the spatial complexity from \comp{n^2} to \comp{K}.

	\section{Numerical experiments}\label{Numeric}
	
	In this section we will compare the performances of the CD methods designed in Section \ref{algorithms} with the CD method for the usual symNMF model~\cite{vandaele2016efficient} 
	on synthetic and real examples. 
	Our code is available from \url{https://sites.google.com/site/nicolasgillis/code} and the numerical examples presented below can be directly run from this online code.
All tests are preformed using Matlab 
R2018a on a laptop Intel CORE i5-5200U CPU @2.2GHz 8Go RAM.

	\subsection{Synthetic examples} \label{synthexam}

	The main goal of the tests on synthetic examples is to show the robustness of the $\ell_1$-norm ODsymNMF when binary noise is added and the effectiveness of the greedy initialization proposed in Section \ref{Initialization}.
	The different experimental setups used are the following:
	
	\begin{itemize}
		\item \textit{Algorithms.} We compare our $\ell_2$-norm and $\ell_1$-norm ODsymNMF algorithms with the symNMF algorithm of \cite{vandaele2016efficient}.
		\item \textit{Initialization.} We compare the greedy initialization described in Section \ref{Initialization} with the zero and random initializations.
		\item \textit{Benchmark matrices.} The idea is to start from an input matrix for which the clustering solution $H^*$ is known and then add binary noise to that matrix.
		The benchmark matrices used are composed of multiple clusters of balanced sizes, that is, the matrix $A$ is a block diagonal matrix whose blocks have different size and are made up of all ones:  
		\[
	A = \left[\begin{array}{*{7}{c}}
	\textcolor{red}{1}&\textcolor{red}{\dots}&\textcolor{red}{1}&0&\dots&&\\
	\textcolor{red}{\vdots}&\textcolor{red}{\ddots}&\textcolor{red}{\vdots}&\vdots&&&\\
	\textcolor{red}{1}&\textcolor{red}{\dots}&\textcolor{red}{1}&0&\dots&&\\
	0&\dots&0&\textcolor{blue}{1}&\textcolor{blue}{\dots}&\textcolor{blue}{1}&\\
	\vdots&&\vdots&\textcolor{blue}{\vdots}&\textcolor{blue}{\ddots}&\textcolor{blue}{\vdots}&\\
	&&&\textcolor{blue}{1}&\textcolor{blue}{\dots}&\textcolor{blue}{1}&\\
	&&&&&&\ddots\\
	\end{array}\right] 
	\text{ with }
	H^* = \left[\begin{array}{*{3}{c}}
	\textcolor{red}{1}&0&\dots\\
	\textcolor{red}{\vdots}&\vdots&\\
	\textcolor{red}{1}&0&\dots\\
	0&\textcolor{blue}{1}&\\
	\vdots&\textcolor{blue}{\vdots}&\\
	&\textcolor{blue}{1}&\\
	&&\ddots
	\end{array}\right] .
	\]
	\Li
	To generate such matrices, we need the sizes of the clusters which we store in the vector $S$. For example, $S = [10\  10\ 5]$ means $A$ contains 2 cliques of size 10 each and a clique of size 5 so that $A$ is a $25$-by-$25$ binary matrix.

		\item \textit{Evaluation metric.} The availability of the ground-truth $H^*$ allows us to quantify the performance of a clustering algorithm. 
	We use a variation of the metric described in \cite{pompili2013onp} that quantifies the level of correspondence between the clusters found $H$ and the ground truth: 
	\begin{equation}
	Accuracy\ =\ 1- \max_{P\in [1,2,\dots,k]} \sqrt{\frac{||H_P - H^*||^2_F}{rn}} \in [0,1], 
	\label{accuracy}
	\end{equation} 
	where $[1,2,\dots,k]$ is the set of permutations of $\{1,2,\dots,k\}$ and $H_P$ is the matrix $H$ whose columns are rearranged according to the permutation $P$.

	\end{itemize}

	When the binary noise added is random, the experiment is repeated 10 times and we report the average accuracy of the solutions computed;  
	the is also done when the random initialization is used.

	\subsubsection{Random binary noise}

In this first experiment, we use 10 clusters of size 10, and the noise level $\delta \in [0,1]$ is  the probability to perturb an entry of $A$. In other words, for each entry of $A$, there is a probability of $\delta$ that this entry is flipped (from 1 to 0, and vice versa). 	

Table~\ref{table_noise} reports the accuracy when the noise level is fixed to $10\%$.  
For the random initialization, symNMF and ODsymNMF-$\ell_2$ perform similarly while ODsymNMF-$\ell_1$ performs badly. The reason is that ODsymNMF-$\ell_1$ is much more sensitive to initialization because it is intrinsically a more difficult problem (for $r=1$, it is NP-hard, which is not the case for symNMF). 
For the greedy initialization, ODsymNMF-$\ell_1$ outperforms symNMF and ODsymNMF-$\ell_2$. This was expected as ODsymNMF-$\ell_1$ is a better model in this scenario (Section~\ref{Proofrank1}), given that we can provide a good initial solution which is made possible through the greedy initialization. 
Moreover, the greedy initialization leads symNMF and ODsymNMF-$\ell_2$ to similar or better results. For this reason, we only keep the greedy initialization for the remainder of our numerical experiments.

\begin{table}[ht]	
	\centerline{
		\begin{tabular}{|cc| ccc| ccc|ccc|}
			\hline
			\multicolumn{2}{|c|}{\multirow{3}{*}{\textit{10$\%$ random noise}}}&\multicolumn{9}{c|}{Initialization type}\\
			\cline{3-11}
			&&\multicolumn{3}{c|}{Random}&\multicolumn{3}{c|}{Zero}&\multicolumn{3}{c|}{Greedy}\\
			\cline{3-11}
			&&OD-$\ell_1$&OD-$\ell_2$&Sym&OD-$\ell_1$&OD-$\ell_2$&Sym&OD-$\ell_1$&OD-$\ell_2$&Sym\\
			\hline
			\multicolumn{1}{|c|}{\multirow{2}{*}{2 clusters}}&balanced&55& 91& \textbf{91}&51& 29& \textbf{91}&\textbf{98}& 91& 91\\
			\cline{2-11}
			\multicolumn{1}{|c|}{}&unbalanced&60& 87& \textbf{88}&62& 29& \textbf{85}&\textbf{94}& 87& 88\\
			\hline
			\multicolumn{1}{|c|}{\multirow{2}{*}{5 clusters}}&balanced&64& 90& \textbf{90}&64& 88& \textbf{90}&\textbf{96}& 90& 90\\
			\cline{2-11}\multicolumn{1}{|c|}{}&unbalanced&66& 89 &\textbf{89}&64& 55& \textbf{90}&\textbf{96}& 90& 90\\
			\hline
			\multicolumn{1}{|c|}{\multirow{2}{*}{10 clusters}}&balanced&76& 90& \textbf{90}&73& 68 &\textbf{90}&\textbf{98}& 90 &90\\
			\cline{2-11}\multicolumn{1}{|c|}{}&unbalanced&78& 86& \textbf{88}&73& 68& \textbf{88}&\textbf{93}& 89& 88\\
			\hline
	\end{tabular} }
	\caption{Summary of the accuracies obtained for the three types of initialization in each problem. OD-$\ell_1$ stands for ODsymNMF-$\ell_1$, OD-$\ell_2$ stands for ODsymNMF-$\ell_2$ and Sym stands for symNMF. The highest accuracy for each initialization type is bolded.}	
	\label{table_noise}
\end{table}

	Figure \ref{randomnoise} provides the accuracy for the different models depending on the noise level. 
	For low levels of noise ($\delta \leq 0.15$), ODsymNMF-$\ell_1$ recovers a very good clustering (accuracy above 90\%). For larger noise levels, the performances deteriorate rapidly. 
	As expected ODsymNMF-$\ell_2$ and symNMF perform similarly since the difference between these two models is the diagonal of $A$ composed of $n$ elements. However, recall that  ODsymNMF-$\ell_2$ is computationally cheaper and has convergence guarantee. 
	
	
	\begin{figure}[!h]
		\centering
		\includegraphics[width=0.6\textwidth]{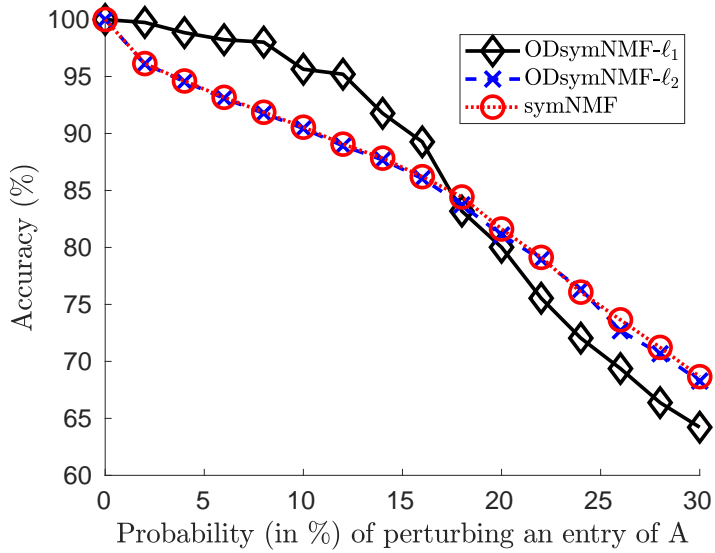}
		\caption{Evolution of the accuracy when random binary noise is added. The input matrix is composed of 10 cliques of size 10 each.}
		\label{randomnoise}
	\end{figure}
	\FloatBarrier

	\subsubsection{Adversarial binary noise}

In order to highlight the differences between ODsymNMF-$\ell_1$ and ODsymNMF-$\ell_2$, let us construct the following adversarial example: 
	\[
	A = \left[\begin{array}{*{9}{c}}
	\textcolor{red}{1}&\textcolor{red}{\dots}&\textcolor{red}{1}&0&\dots&0&{0}&{\dots}&{0}\\
	
	\textcolor{red}{\vdots}&\textcolor{red}{\ddots}&\textcolor{red}{\vdots}&\vdots&\ddots&\vdots&{\vdots}&&{\vdots}\\
	
	\textcolor{red}{1}&\textcolor{red}{\dots}&\textcolor{red}{1}&0&\dots&0&{\vdots}&&{\vdots}\\
	
	0&\dots&0&\textcolor{blue}{1}&\textcolor{blue}{\dots}&\textcolor{blue}{1}&{\vdots}&&{\vdots}\\
	
	\vdots&\ddots&\vdots&\textcolor{blue}{\vdots}&\textcolor{blue}{\ddots}&\textcolor{blue}{\vdots}&{\vdots}&&{\vdots}\\
	
	0&\dots&0&\textcolor{blue}{1}&\textcolor{blue}{\dots}&\textcolor{blue}{1}&{0}& {\dots}&{0}\\
	
	{0}&{\dots}&{\dots}&{\dots}&{\dots}&{0}&&&\\
	
	{\vdots}&&&&&{\vdots}&&\mathbb{I}_m&\\
	
	{0}&{\dots}&{\dots}&{\dots}&{\dots}&{0}&&&\\
	
	
	\end{array}\right], 
	\] 
where the  matrix $A$ is composed of two cliques and of $m$ isolated elements (identity matrix $\mathbb{I}_m$).
	The adversarial noise consists in adding connections between the cliques and the isolated elements. 
	Here the noise level is the number of connections added between an isolated element and the 2 cliques. 
	As long as this number of connections does not exceed half the size of the cliques, we can expect ODsymNMF-$\ell_1$ to recover the ground truth.
	This is in fact what is observed in Figure \ref{adversarialnoise}.
	
	\begin{figure}[!h]
		\centering
		\includegraphics[width=0.6\textwidth]{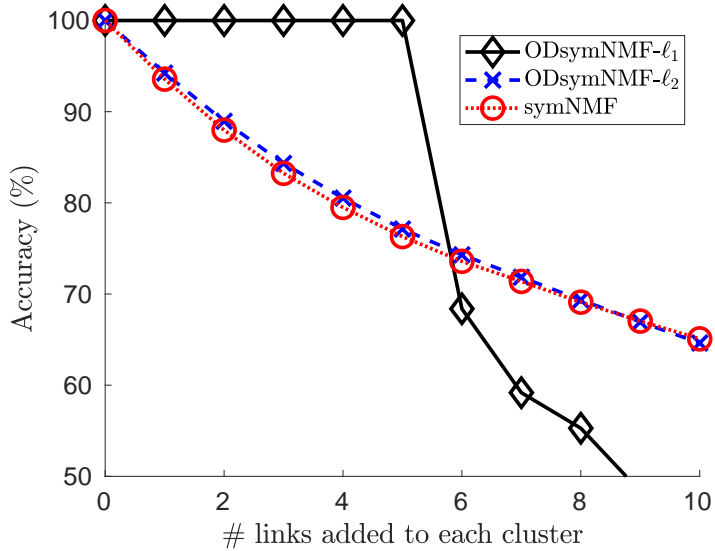}
		\caption{Evolution of the accuracy when adversarial binary noise is added. The input matrix is composed of 2 cliques of size 10 each and an identity matrix of size 10.}
		\label{adversarialnoise}
	\end{figure}
	\FloatBarrier

	\paragraph{Conclusions} 
	The conclusions from running these synthetic experiments is threefold:  the greedy initialization outperforms the zero and random initializations,  ODsymNMF-$\ell_1$ outperforms ODsymNMF-$\ell_2$ and symNMF in the presence of binary noise while ODsymNMF-$\ell_2$ and symNMF perform similarly.

	\subsection{Document data sets}

	We now perform clustering of real documents data sets. 
	These documents are represented as a word-count matrix $X\in \mathbb{N}^{n\times m}$; see Table \ref{tab:documents}. 
	\begin{table}[ht]
		\centering
		\small
		\begin{tabular}{|c|c|c|c|} 
			\hline
			Data set & $\#$ Documents ($=n$)&$\#$ Words ($=m$)& rank $r$\\
			\hline\hline
			classic&7094&41681&4\\
			ohscal&11162&11465&10\\
			hitech&2301&10080&6\\
			reviews&4069&18483&5\\
			sports&8580&14870&7\\
			la1&3204&31472&6\\
			la2&3075&31472&6\\
			k1b&2340&21839&6\\
			tr11&414&6429&9\\
			tr23&204&5832&6\\
			tr41&878&7454&10\\
			tr45&690&8261&10\\
			\hline
		\end{tabular}
		\caption{Data of 12 documents sets from~\cite{zhong2005generative}.}
		\label{tab:documents}
	\end{table}
	\FloatBarrier

	\paragraph{Similarity matrix A}
	
	In order to obtain a similarity matrix starting from the word-count matrix $X$,  we choose to use a simple but powerful one: the cosine angle.
	The similarity between documents $a$ and $b$ is then equal to $\frac{x_a^Tx_b}{\|x_a\|_2\|x_b\|_2}$.
	The values inside $A$ are therefore reals (not binary); this implies a lot of overlap hence a difficult problem. The `correct' clustering is known as the documents are sorted in categories.  
	We refer the reader to~\cite{strehl2000impact} for a discussion on the construction of the similarity matrix.

	\paragraph{Interpretation of the output $H$}
	
	Table~\ref{tab:results} reports the accuracy~\eqref{accuracy} of symNMF, ODsymNMF-$\ell_2$ and ODsymNMF-$\ell_1$.
	We used the greedy initialization for all algorithms.

	\begin{table}[ht]
		\centering
		\begin{tabular}{|c| c| c|c|}
			\hline
			Data & SymNMF&ODsymNMF-$\ell_2$ & ODsymNMF-$\ell_1$ \\
			\hline\hline
			classic&63.67&63.67&\textbf{66.33}\\
			ohscal&\textbf{43.24}&43.16&38.08\\
			hitech&49.07&49.24&\textbf{52.19}\\
			reviews&49.37&49.55&\textbf{70.07}\\
			sports&\textbf{51.46}&51.41&48.81\\			
			la1&\textbf{49.16}&48.81&40.61\\
			la2&\textbf{48.94}&48.62&39.45\\
			k1b&57.18&58.68&\textbf{66.45}\\
			tr11&59.66&\textbf{59.90}&51.21\\
			tr23&35.29&35.29&\textbf{36.76}\\
			tr41&46.70&\textbf{47.15}&47.04\\
			tr45&42.90&42.61&\textbf{43.04}\\
			\hline
		\end{tabular} 
		\caption{Accuracy (in $\%$) for each data set. The bold values are the best of each line.}
		\label{tab:results}
	\end{table}

We observe the following: 
\begin{itemize}

\item As for the synthetic data sets, SymNMF and ODsymNMF-$\ell_2$ perform similarly (but bare in mind that ODsymNMF-$\ell_2$ has numerical and theoretical advantages). 

\item ODsymNMF-$\ell_1$ performs very differently than SymNMF and ODsymNMF-$\ell_2$. In some cases, it provides a much better accuracy (in particular, for the reviews data set; from 50\% to 70\% accuracy) and, in other cases, a worse accuracy (from about 50\% to 40\% for la1 and la2). 
The reson why ODsymNMF-$\ell_1$ does not outperform SymNMF and ODsymNMF-$\ell_2$ is that the data sets are not binary, and do not follow the low-rank model very closely. However, 
it is interesting to observe that these models obtain rather different solutions hence should be used in different scenario depending on the (noise) model. 
Moreover, they are able to extract different clusters within data sets. Hence an interesting direction of research would be to combine $\ell_1$ and $\ell_2$ norm models (e.g., using an objective function which is a combinations of these two objectives as in~\cite{gillis2019distributionally}) 
to outperform ODsymNMF-$\ell_2$ and ODsymNMF-$\ell_1$ in all cases.

\end{itemize}	
	

	\section{Conclusion} 
	
	In this paper, we proposed a new meaningful model for symmetric nonnegative matrix factorization (symNMF) by discarding the diagonal elements; we refer to this model as off-diagonal symNMF (ODsymNMF). 
	This allowed us to design efficient coordinate descent algorithms for the $\ell_2$ norm and $\ell_1$ norm. 
For the $\ell_2$ norm, our algorithm has the advantage to be computationally cheaper than the CD method of symNMF~\cite{vandaele2016efficient} (the subproblems in one variable are quadratic instead of quartic) 
while having convergence guarantees. 
For the $\ell_1$ norm, this is, to the best of our knowledge, the first algorithm of this kind for symNMF. It was made possible precisely because we discarded the diagonal elements. This 
$\ell_1$-norm model is better suited for binary input matrices which we theoretically proved in Section~\ref{Proofrank1} in the rank-one case, and empirically illustrated in Section~\ref{synthexam} on synthetic data sets. 
We also provided numerical experiments for real document data sets, where the    $\ell_2$-norm and $\ell_1$-norm models perform rather differently. Future work includes the design of other initialization strategies, as well as 
 new symNMF-like models that would adapt to the structure of the input matrix for example using distributionally robust models as in~\cite{gillis2019distributionally}.

	\appendix
	
	\section{The constrained weighted median problem}
	\label{appendix_weighted_median}

Algorithm \ref{algo_cons_weightedmedian} provides a pseudocode to compute the solution to the constrained  weighted median problem 
\[
\min_{x \geq 0} \sum_i |a_i x - b_i|. 
\]
The algorithm works as follow:
\begin{itemize}

	\item the set $S$ of breakpoints $\frac{b_i}{a_i}$ is initialized for all $i=1,...,n$ such that $a_i\neq 0$ (because when $a_i=0$, the contribution of the $i$th term in the objective function is a constant)  and the vector $a$ is then sorted and normalized according to the values in $S$,
	
	\item as the values $a_i$ correspond to the slopes, the second step of the algorithm looks for the $k$th breakpoint for which we have 
	$\sum_{i=1}^{k-1} a_i < \sum_{i=k}^n a_i$ and 
	$\sum_{i=1}^k a_i \geq \sum_{i=k+1}^n a_i$. 
	It corresponds to a global optimum since the slope on the left is negative, and on the right is nonnegative. 
	
\end{itemize}
	\begin{algorithm}
		\caption{$\quad x = constrained\_weighted\_median(a,b)$}\label{algo_cons_weightedmedian}
		\begin{algorithmic}[1]
			\item INPUT: $a\in \mathbb{R}^{n}_+$, $b\in \mathbb{R}^{n}$
			\item OUTPUT: $x\in \mathbb{R}$
			\State $S\gets \emptyset$
			\For{$i=1:n$}
			\If{$a_i \neq 0$}
			\State $S\gets S\cup\{\frac{b_i}{a_i}\}$ \label{Scup}
			\EndIf
			\EndFor
			\State $[S,Inds]\gets sort(S)$
			\State $a \gets \frac{a(Inds)}{sum(a)}$
			\State $i \gets 1$
			\State $CumulatedSum \gets 0$
			\While{$CumulatedSum < 0.5$}
			\State $CumulatedSum \gets CumulatedSum + a_i$
			\State $x\gets S_i$
			\State $i\gets i+1$
			\EndWhile
			
		\end{algorithmic}
	\end{algorithm}
	\FloatBarrier
	
	\newpage 
	
	\bibliographystyle{spmpsci}
	\bibliography{ArticleODsymNMF}

\end{document}